\newtheorem{theorem}{Theorem}
\newtheorem{proposition}{Proposition}
\newtheorem{lemma}{Lemma}
\newtheorem{corollary}{Corollary}
\theoremstyle{remark}
\newtheorem{remark}{Remark}
\newtheorem{example}{Example}
\newtheorem{definition}{Definition}
\newcommand{\C}{\mathbb{C}}
\newcommand{\disk}{\mathbb{D}}
\newcommand{\D}{\Omega}
\newcommand{\J}{\mathcal{J}}
\newcommand{\ep}{\varepsilon}
\newcommand{\Lb}{\overline{L}} 
\newcommand{\Dc}{\overline{\Omega}}
\newcommand{\dbar}{\overline{\partial}}
\newcommand{\wb}{\overline{w}}
\title{On compactness and $L^p$-regularity in the $\dbar$-Neumann problem}
\author{S\"{o}nmez \c{S}ahuto\u{g}lu}
\address[S\"{o}nmez \c{S}ahuto\u{g}lu]{University of Toledo, 
Department of Mathematics \& Statistics, Toledo, OH 43606, USA}
\email{Sonmez.Sahutoglu@utoledo.edu}
\author{Yunus E. Zeytuncu}
\address[Yunus E. Zeytuncu]{University of Michigan--Dearborn, 
Department of Mathematics \& Statistics, Dearborn, MI 48128, USA}
\email{zeytuncu@umich.edu}
\subjclass[2010]{Primary 32W05; Secondary 46E30}
\keywords{$\dbar$-Neumann problem, pseudoconvex domains, 
compactness, $L^p$-regularity}
\thanks{The work of the second author is partially supported by a grant 
	from the Simons Foundation (\#353525).}
\begin{document}

\begin{abstract}
Let $\Omega$ be a $C^4$-smooth bounded pseudoconvex domain 
in $\mathbb{C}^2$. We show that  if the $\dbar$-Neumann operator $N_1$ is 
compact on $L^2_{(0,1)}(\Omega)$ then the embedding operator
$\mathcal{J}:Dom(\overline{\partial})\cap Dom(\overline{\partial}^*) 
\to L^2_{(0,1)}(\Omega)$ is $L^p$-regular for all $2\leq p<\infty$.  
\end{abstract}
\maketitle

\section{Introduction}
Let $\D$ be a bounded pseudoconvex domain in $\C^n$, $1\leq q\leq n$,  
and let $Dom^2(\dbar)$ and $Dom^2(\dbar^*)$ denote the domains of the densely 
defined operators $\dbar$ and $\dbar^*$ in $L^2_{(0,q)}(\D)$, respectively. 
On bounded pseudoconvex domains, H\"{o}rmander in \cite{Hormander65} 
proved  the following basic estimate, 
\[\|f\|_{L^2}\lesssim \|\dbar f\|_{L^2}+\|\dbar^*f\|_{L^2}\] 
for all $(0,q)$-forms $f\in Dom^2(\dbar)\cap Dom^2(\dbar^*)\subset L^2_{(0,q)}(\D)$. 
The sum on the right hand side is called the $L^2$-graph norm of the 
$(0,q)$-form $f$. In other words, the embedding operator
\[\J :Dom^2(\dbar)\cap Dom^2(\dbar^*)\to L^2_{(0,q)}(\D)\] 
is bounded, where the space on the left hand side is endowed with the graph
norm. 

Let $1<p,\widetilde{p}<\infty$ such that $p^{-1}+\widetilde{p}^{-1}=1$. We define 
$Dom^p(\dbar)=\{f\in L^p_{(0,q)}(\D):\dbar f\in  L^p_{(0,q+1)}(\D)\}$. 
We define $Dom^p(\dbar^*)$ as follows: we say 
$f\in Dom^p(\dbar^*)$ if $f\in L^p_{(0,q)}(\D)$ and there exists $C>0$ such that 
\[|\langle f,\dbar g\rangle|\leq C\|g\|_{L^{\widetilde{p}}}\]
for all $g\in L^{\widetilde{p}}_{(0,q-1)}(\D)$ with $\dbar g\in L^{\widetilde{p}}_{(0,q)}(\D)$. 
Finally, we define the space 
\[D^p_{(0,q)}(\D)= Dom^p(\dbar)\cap Dom^p(\dbar^*)\subset L^p_{(0,q)}(\D)\]
and endow it with the $L^p$-graph norm $\|.\|_{G^p}$ defined as 
\[\|f\|_{G^p}= \|\dbar f\|_{L^p}+\|\dbar^* f\|_{L^p}\]
for $f\in D^p_{(0,q)}(\D)$. We note that on bounded pseudoconvex 
domains, $\|.\|_{G^p}$ is a norm because $\dbar f=0$ and $\dbar^* f=0$  
imply that $f=0$ for $1\leq q\leq n$ (see, for example, 
\cite[(4.4.2) in section 4.4]{ChenShawBook}).

\begin{definition}
We say that the operator $\J$ is \textit{$L^p$-regular} on $D^p_{(0,q)}(\D)$ if 
 there exists $C>0$ such that
\[\|\J f\|_{L^p} =\|f\|_{L^p}
\leq C\|f\|_{G^p} =C\left(\|\dbar f\|_{L^p}+\|\dbar^* f\|_{L^p}\right) \] 
for all $f\in D^p_{(0,q)}(\D)$. 
\end{definition} 
That is, whenever $\J:D^p_{(0,q)}(\D) \to   L^p_{(0,q)}(\D)$ is a bounded embedding 
we say that it is $L^p$-regular. In particular, by H\"ormander's basic estimate above, 
$\J$ is $L^2$-regular on bounded pseudoconvex domains. We note that $D^p_{(0,q)}(\D)$ 
is a Banach space (for $1\leq q\leq n$ with  the graph norm $\|.\|_{G^p}$) when $\J$ is 
$L^p$-regular.

The operator $\J$ is related to the $\dbar$-Neumann operator $N$, 
the bounded inverse of the complex Laplacian $\dbar^*\dbar+\dbar\dbar^*$ 
on $L^2_{(0,q)}(\D)$, as $N=\J \J^*$ (see, for example, 
\cite[Proof of Theorem 2.9]{StraubeBook}). 
Hence, $N$ is compact if and only if $\J$ is compact. In this note, we 
show that compactness of $N$ implies $L^p$-regularity of $\J$ for 
$2\leq p<\infty$. We also note that it is not yet clear if $\J $ is $L^p$-regular for 
$1<p<2$ under the compactness assumption. We further note that the question of 
whether  the $\dbar$-Neumann operator or the Bergman projection are bounded 
in  $L^p$-norm whenever $\J$ is compact is open as well.

Although the mapping properties of the canonical operators relate well in the 
$L^2$-Sobolev setting, similar equivalences in the $L^p$ setting are less clear. 
In \cite{BonamiSibony91}, Bonami and Sibony obtained $L^p$ estimates for 
the solutions of $\dbar$-problem under some Sobolev estimates. 
In \cite{HarringtonZeytuncu19} Harrington and Zeytuncu obtained some 
$L^p$ estimates on the canonical operators under the assumption of the existence 
of good weight functions. Both assumptions are more stringent than the compactness 
of $N$ and hence the $L^p$ estimates are more general. Also, recently, Haslinger 
in \cite[Theorem 2.2]{Haslinger16} showed that if $\J$ gains regularity in the 
$L^p$ scale then $N$ is compact. In this paper, we observe a property that is less 
general than the ones in \cite{BonamiSibony91,HarringtonZeytuncu19} under a 
weaker assumption, and that is in the converse direction of the result in \cite{Haslinger16}. 
Namely, in Theorem \ref{Thm1} below, we show that compactness of $N_1$ 
(at the $L^2$ level) induces $L^p$-regularity of $\J$ for $2\leq p<\infty$.

\begin{theorem}\label{Thm1}
Let $\D$ be a $C^4$-smooth bounded pseudoconvex domain in $\C^2$. 
Assume that $N_1$ is compact on $L^2_{(0,1)}(\D)$ (or, equivalently, $\J$ 
is compact on $D^2_{(0,1)}(\D)$). Then   $\J$ is $L^p$-regular 
on $D^p_{(0,1)}(\D)$ for all $2\leq p<\infty$. 
\end{theorem}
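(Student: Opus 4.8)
The plan is to isolate a single a priori estimate carrying an $L^2$ remainder and to show that compactness of $N_1$ is precisely what governs the extra integrability of $f$ near $b\D$. So I would first aim to prove that there is a constant $C$ with
\[\|f\|_{L^p}\leq C\left(\|\dbar f\|_{L^p}+\|\dbar^* f\|_{L^p}\right)+C\|f\|_{L^2}\qquad(\star)\]
for all $f\in D^p_{(0,1)}(\D)$. Estimate $(\star)$ closes the argument at once: since $\D$ is bounded and $p\geq 2$, H\"older gives $L^p_{(0,1)}(\D)\hookrightarrow L^2_{(0,1)}(\D)$, so H\"ormander's basic estimate yields $\|f\|_{L^2}\lesssim\|\dbar f\|_{L^2}+\|\dbar^* f\|_{L^2}\lesssim\|\dbar f\|_{L^p}+\|\dbar^* f\|_{L^p}$; inserting this into $(\star)$ removes the remainder and gives exactly the $L^p$-regularity bound. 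The point of this reduction is that $(\star)$ is trivial when $p=2$, so its entire content is the control of the higher integrability of $f$, which is where compactness must do its work. After a routine mollification reducing to $f$ smooth up to $\Dc$, I may integrate by parts freely.

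I would then split $(\star)$ by a partition of unity into an interior part and a boundary collar. The interior part is unconditional, because there the first-order system $f\mapsto(\dbar f,\dbar^* f)$ is elliptic: writing $f=f_1\,d\zb_1+f_2\,d\zb_2$, the symbol of $(\dbar,\vartheta)$ is the matrix with rows $(-\overline{\zeta}_2,\overline{\zeta}_1)$ and $(\zeta_1,\zeta_2)$, with determinant $-(|\zeta_1|^2+|\zeta_2|^2)\neq 0$ for $\zeta\neq 0$. Hence Calder\'on--Zygmund elliptic $L^p$ theory gives, for $\D'\Subset\D$, a bound $\|f\|_{W^{1,p}(\D')}\lesssim\|\dbar f\|_{L^p}+\|\dbar^* f\|_{L^p}+\|f\|_{L^2}$, the subordinate term being in the weaker $L^2$ norm since the parametrix error is smoothing.

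The boundary collar is the crux, and the only place the compactness of $N_1$ is used: there the $\dbar$-Neumann problem is genuinely non-elliptic, as the boundary condition annihilates the normal but not the tangential component, so interior regularity is unavailable. Here I would run a weighted Kohn--Morrey--H\"ormander identity with a self-improving weight $e^{-\varphi}$, $\varphi=\lambda_\ep+(p-2)\psi$, in which $\lambda_\ep$ is the bounded plurisubharmonic function supplied by compactness (so $|\lambda_\ep|\leq 1$ while its complex Hessian is as large as desired near $b\D$) and $e^{-(p-2)\psi}$ is matched to $|f|^{p-2}$ so that the weighted $L^2$ mass reproduces $\|f\|_{L^p}^p$. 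Because $\lambda_\ep$ is bounded, the weight stays comparable to $1$ and weighted and unweighted norms agree up to constants, while the large Hessian absorbs the errors coming from differentiating the weight and from passing between $\dbar^*$ and its weighted adjoint. The hardest step to make rigorous is exactly this reconciliation of the self-improving $|f|^{p-2}$ weight with plurisubharmonicity and with the weighted adjoint; it is also what forces $p\geq 2$, since for $p<2$ the exponent $p-2<0$ makes $|f|^{p-2}$ singular where $f$ is small and the scheme collapses, consistent with the open case flagged above.
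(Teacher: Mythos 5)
Your reduction to the estimate $(\star)$ is sound (given $(\star)$, H\"older plus H\"ormander's $L^2$ estimate do remove the remainder, since for $p\geq 2$ a form in $D^p_{(0,1)}(\D)$ lies in $D^2_{(0,1)}(\D)$ on a bounded domain), and the interior elliptic estimate is unproblematic. The fatal gap is in the boundary collar, the only place you invoke compactness: you call on ``the bounded plurisubharmonic function supplied by compactness,'' with $|\lambda_\ep|\leq 1$ and arbitrarily large complex Hessian near $b\D$. That is precisely Catlin's property $(P)$. Property $(P)$ \emph{implies} compactness of $N_1$, but the converse implication is a well-known open problem; compactness of $N_1$ does not supply such functions. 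What compactness actually gives (and is equivalent to) is a functional-analytic compactness estimate: for every $\ep>0$ there exists $C_\ep>0$ with
\begin{align*}
\|f\|_{L^2}\leq \ep\left(\|\dbar f\|_{L^2}+\|\dbar^* f\|_{L^2}\right)+C_\ep\|\rho f\|_{L^2},
\end{align*}
where $\rho$ is the boundary distance (this is how the paper uses the hypothesis, via its Lemma on compactness estimates and the compact injective multiplier $K f=\rho f$). Any proof of the theorem must run off this estimate alone; as written, your argument assumes a strictly stronger hypothesis than the theorem's.

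Even granting property $(P)$, the step you yourself flag as hardest --- the weight $e^{-(p-2)\psi}$ ``matched to $|f|^{p-2}$'' --- is not a technical detail but the entire content of the problem, and it does not fit the Kohn--Morrey--H\"ormander framework: the weight there must be a fixed, solution-independent function with a Hessian lower bound, whereas $|f|^{p-2}$ depends on $f$, is not smooth where $f$ vanishes, and contributes no positivity. It is telling that your scheme never uses $n=2$, while the theorem and its proof are genuinely dimension-restricted. The paper's substitute for your self-improving weight is an algebraic squaring device special to $(0,1)$-forms in $\C^2$: writing $F=f_1\wb_1+f_2\wb_2$ with $f_2$ the normal component, one forms $F_2=f_1^2\wb_1+f_2^2\wb_2$; because $f_2$ vanishes on $b\D$, $F_2$ lies in $D^2_{(0,1)}(\D)$ and $\|F\|_{L^4}^4\approx\|F_2\|_{L^2}^2$, so the $L^2$ compactness estimate applied to $F_2$ (with commutator and normal-component errors controlled by the $W^{1,p}_0$ estimate for $f_{norm}$ and absorbed) yields an $L^4$ compactness estimate; iterating gives $L^{2^k}$-regularity and complex interpolation fills in all $2\leq p<\infty$. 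If your weighted scheme worked as stated, it would prove $L^p$-regularity for $(0,1)$-forms in every dimension under property $(P)$ --- a much stronger statement than this theorem, and one the paper's method cannot reach --- which is a further sign the boundary step has not actually been carried out.
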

We note that the $L^p$ boundedness is not an automatic consequence of 
compactness on $L^2$; as we demonstrate with 
Example \ref{Example2}, in which we present an operator that is compact 
on the $L^2$ space but unbounded on any $L^p$ spaces for $p\neq 2$. 

In the rest of the paper, we use the symbol $x\lesssim y$ to mean that there 
exists $C>0$ such that $x\leq Cy$. Furthermore, when we write a family of 
inequalities depending on a parameter $\ep$ 
\[x\lesssim \ep y,\] 
we mean that there exists $C>0$ that is independent of $\ep$ such 
that $x\leq C\ep y$.

\section{Proof of Theorem \ref{Thm1}} 
{One can prove the following density lemma similarly as in 
\cite[Lemma 4.3.2]{ChenShawBook} (see also \cite[Proposition 2.3]{StraubeBook})
using an $L^p$ version of Friedrichs Lemma 
(see, for instance, \cite[Lemma 3.1]{BlouzaLeDret01}).

\begin{lemma}\label{LemDensity} 
Let $\D$ be a $C^{k+1}$-smooth bounded domain in $\C^n$ $k,1\leq q\leq n$, 
and $1<p<\infty$. Then  $C^k_{(0,q)}(\Dc)\cap Dom(\dbar^*)$ is dense in 
$D^p_{(0,q)}(\D)$ in the graph norm 
$f\to \|f\|_{L^p} +\|\dbar f\|_{L^p}+\|\dbar^*f\|_{L^p}$. 
The statement also holds with $k$ and $k+1$ replaced with $\infty$. 
\end{lemma}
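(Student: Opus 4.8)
The plan is to transport the classical $L^2$ argument of Chen--Shaw \cite[Lemma 4.3.2]{ChenShawBook} (see also \cite[Proposition 2.3]{StraubeBook}) to the $L^p$ setting, substituting the $L^p$ Friedrichs lemma \cite[Lemma 3.1]{BlouzaLeDret01} for its $L^2$ analogue at every place where a commutator between a first-order operator and a mollifier must be controlled. First I would fix a finite open cover $\{U_j\}$ of $\Dc$ with a subordinate partition of unity $\{\phi_j\}$ and reduce the claim to approximating each $\phi_j f$. Since multiplication by $\phi_j\in C^\infty$ preserves $D^p_{(0,q)}(\D)$ and changes the graph norm only by zeroth-order terms bounded by $\|f\|_{L^p}$, it suffices to treat a single patch. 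On any $U_j$ relatively compact in $\D$ the form $\phi_j f$ has compact support, and ordinary mollification $\chi_\ep*(\phi_j f)$ gives $C^\infty$ approximants; the one thing to verify is that $\dbar$ and $\dbar^*$ of the mollifications converge in $L^p$, which follows from the $L^p$ Friedrichs lemma applied to the (constant-coefficient) operators $\dbar$ and $\vartheta$.

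The substance lies at a patch $U_j$ meeting $b\D$. There I would choose, as in the $L^2$ proof, a $C^k$ orthonormal frame $\bar\omega^1,\dots,\bar\omega^n$ of $(0,1)$-forms with $\bar\omega^n$ proportional to $\dbar\rho$, where $\rho\in C^{k+1}$ is a defining function; in this frame $\dbar^*$ has $C^k$ coefficients (one derivative of $\rho$ is lost), and for $f=\sum'_{|I|=q} f_I\,\bar\omega^I$ membership in $Dom^p(\dbar^*)$ is encoded as the vanishing on $b\D$ of every coefficient $f_I$ with $n\in I$. I would then split $f$ into its tangential part (terms with $n\notin I$) and its normal part (terms with $n\in I$). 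The tangential part carries no boundary condition, so shifting it a distance $t>0$ along the inward normal and mollifying with $\ep\ll t$ yields $C^k(\Dc)$ forms that are again tangential, hence automatically in $Dom(\dbar^*)$; the normal part, whose coefficients vanish on $b\D$, must be smoothed so that this vanishing is preserved, which is arranged by the same shift-and-mollify scheme adapted to the half-space model. Letting $\ep\to0$ and then $t\to0$, the $L^p$ Friedrichs lemma yields convergence of $\dbar$ and $\dbar^*$ of the approximants to $\dbar f$ and $\dbar^* f$ in $L^p$, while $L^p$-convergence of the forms themselves is immediate.

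The step I expect to be the main obstacle is the boundary patch, and within it the normal component: one must organize the translation and mollification so that the smoothed normal coefficients genuinely vanish on $b\D$ (so the approximants lie in $Dom(\dbar^*)$) while, simultaneously, the error terms produced by the $C^k$ coefficients of $\dbar^*$ and by the commutator with the shift tend to zero in $L^p$ rather than merely in $L^2$. Both requirements rest on the $L^p$ Friedrichs lemma, which is available precisely because $k\ge1$ makes the frame coefficients Lipschitz; the remainder is a routine transcription of the $L^2$ bookkeeping. Finally, the case $k=\infty$ is obtained by running the identical construction with $C^\infty$ data, so that the shift-and-mollify approximants are smooth up to $\Dc$.
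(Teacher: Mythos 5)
Your proposal coincides with the paper's own proof, which is given only as a one-line pointer: rerun the $L^2$ density argument of Chen--Shaw \cite[Lemma 4.3.2]{ChenShawBook} (see also \cite[Proposition 2.3]{StraubeBook}), replacing the classical Friedrichs lemma by the $L^p$ version of Blouza--Le Dret \cite[Lemma 3.1]{BlouzaLeDret01} at every point where mollification is commuted with the variable-coefficient first-order operators. Your elaboration --- partition of unity, interior mollification, special $C^k$ boundary frame, tangential/normal splitting with translation-plus-mollification, and the observation that the boundary vanishing of the normal coefficients (encoded in the weak definition of $Dom^p(\dbar^*)$) must be preserved by the smoothing --- is precisely the content of that cited argument, so the approaches are the same.
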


We will need the following lemma which is a corollary of 
\cite[Theorem 1.1]{JerisonKenig95}.
\begin{lemma}[Jerison-Kenig]\label{LemNegative}
Let $\D$ be a $C^1$-smooth bounded domain in $\mathbb{R}^n$ 
and $1<p<\infty$. Then there exists $C>0$ such that 
\begin{align}\label{EqnIso}
\|u\|_{W^{1,p}} \leq C \|\Delta u\|_{W^{-1,p}} 
\end{align}
for all $u\in W^{1,p}_0(\D)$.
\end{lemma}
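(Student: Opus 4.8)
The plan is to read off \eqref{EqnIso} as the \emph{a priori} estimate that accompanies the invertibility of the Dirichlet Laplacian between the relevant Sobolev spaces. First I would view the Laplacian as a map
\[
\Delta \colon W^{1,p}_0(\D) \longrightarrow W^{-1,p}(\D), \qquad
\langle \Delta u, v\rangle = -\int_\D \nabla u\cdot \nabla v,
\]
where $W^{-1,p}(\D)$ is the dual of $W^{1,\widetilde p}_0(\D)$ with $p^{-1}+\widetilde p^{-1}=1$. Boundedness of $\Delta$ in these norms, with $\|\Delta u\|_{W^{-1,p}}\leq \|u\|_{W^{1,p}}$, is immediate from H\"older's inequality. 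Hence the entire content of \eqref{EqnIso} is the \emph{reverse} inequality, i.e.\ that $\Delta$ is bounded below, equivalently that it admits a bounded inverse on its range.

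The core input is \cite[Theorem 1.1]{JerisonKenig95}, which asserts that on a bounded Lipschitz domain the map $\Delta\colon W^{1,p}_0\to W^{-1,p}$ is an isomorphism for $p$ in an interval $(p_0,p_0')$ centered (in the conjugate sense) at $2$, whose width is controlled by the Lipschitz character of $\partial\D$, and which exhausts $(1,\infty)$ as the Lipschitz constant tends to $0$. Here I would invoke the $C^1$-smoothness hypothesis: a $C^1$ boundary has arbitrarily small Lipschitz constant at small scales, so after the standard boundary-flattening and covering argument the admissible range of exponents for $\D$ is the full range $1<p<\infty$. I would also record that it suffices to treat $p\geq 2$, since self-adjointness of $\Delta$ identifies, by duality, the isomorphism for $p$ with the one for the conjugate exponent $\widetilde p$; the admissible range is therefore invariant under $p\mapsto\widetilde p$, with $p=2$ self-conjugate.

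Granting that $\Delta\colon W^{1,p}_0(\D)\to W^{-1,p}(\D)$ is a continuous bijection between Banach spaces — both are complete, as $W^{1,p}_0(\D)$ is a closed subspace of $W^{1,p}(\D)$ and $W^{-1,p}(\D)$ is a dual space — the bounded inverse theorem produces a bounded inverse $\Delta^{-1}$. Writing $u=\Delta^{-1}(\Delta u)$ for $u\in W^{1,p}_0(\D)$ then gives
\[
\|u\|_{W^{1,p}}=\|\Delta^{-1}(\Delta u)\|_{W^{1,p}}\leq \|\Delta^{-1}\|\,\|\Delta u\|_{W^{-1,p}},
\]
which is exactly \eqref{EqnIso} with $C=\|\Delta^{-1}\|$.

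The only substantive point, and hence the main obstacle, is the range statement: confirming that the Jerison--Kenig interval of admissible exponents is genuinely all of $(1,\infty)$ for a $C^1$ domain rather than a proper subinterval. This is where $C^1$ regularity (as opposed to mere Lipschitz regularity) is essential, through the vanishing of the local Lipschitz constant and the attendant perturbation from the half-space model; once that is secured, the passage to \eqref{EqnIso} is the purely formal open-mapping argument above.
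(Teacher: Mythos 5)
Your proposal is correct and takes essentially the same route as the paper: the paper offers no argument beyond citing \cite[Theorem 1.1]{JerisonKenig95}, of which the lemma is an immediate consequence exactly as you describe (the a priori bound is part of the isomorphism statement, or is formally recovered from it via the bounded inverse theorem, as in your last display). The only remark is that the ``main obstacle'' you flag is moot: Jerison--Kenig's Theorem 1.1 itself asserts the full range $1<p<\infty$ when the domain is $C^1$, so your sketched small-Lipschitz-constant flattening/perturbation argument is not needed to justify the citation.
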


Using the lemmas above together with the proof of \cite[Lemma 2.12]{StraubeBook} 
one can prove the following estimate on the normal component of forms. We note 
that, in the lemma below, $f_{norm}$ denotes the normal component of $f$ 
(see (2.86) in \cite{StraubeBook}). 

\begin{lemma}\label{LemNormal}
Let $\D$ be a $C^4$-smooth bounded pseudoconvex domain in $\C^n,1\leq q\leq n$, 
and $1<p<\infty$. There exists $C>0$ such that if $f\in D^p_{(0,q)}(\D)$ then 
$f_{norm}\in W^{1,p}_{0,(0,q-1)}(\D)$ and 
\[ \|f_{norm}\|_{W^{1,p}}\leq C\left(\|\dbar f\|_{L^p}+\|\dbar^* f\|_{L^p}+\|f\|_{L^p}\right).\]
\end{lemma}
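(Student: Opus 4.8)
The plan is to follow the strategy of the proof of \cite[Lemma 2.12]{StraubeBook}, replacing the $L^2$ elliptic gain by the Jerison--Kenig estimate \eqref{EqnIso} and, crucially, arranging every integration by parts so that the regularity is carried by a test form rather than by $f$ itself. First I would reduce to a smooth form: by Lemma \ref{LemDensity} it suffices to prove the a priori estimate for $f\in C^k_{(0,q)}(\Dc)\cap Dom(\dbar^*)$, since the bound then passes to the limit in the graph norm and the normal components converge in $W^{1,p}$ to $f_{norm}$. Working in a boundary collar with the usual special frame $\bar\omega^1,\dots,\bar\omega^n$ (with $\bar\omega^n$ complex normal), write $f=\sum_J f_J\bar\omega^J$; the normal component consists of the coefficients $f_{nK}$, and the membership $f\in Dom(\dbar^*)$ forces $f_{nK}=0$ on $b\D$. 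Fixing a cutoff $\chi$ supported in the collar and equal to $1$ near $b\D$, each $\chi f_{nK}$ lies in $W^{1,p}_0(\D)$, so Lemma \ref{LemNegative} gives $\|\chi f_{nK}\|_{W^{1,p}}\lesssim \|\Delta(\chi f_{nK})\|_{W^{-1,p}}$, and after commuting $\Delta$ past $\chi$ (whose commutator is first order, hence $\lesssim\|f\|_{L^p}$ in $W^{-1,p}$) the task becomes to bound $\|\Delta f_{nK}\|_{W^{-1,p}}$ on the collar.

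Next I would invoke the computation underlying \cite[Lemma 2.12]{StraubeBook}: the principal part of the complex Laplacian is diagonal and equals $-\tfrac14\Delta$ on each coefficient, so in the special frame $\Delta f_{nK}=-4(\Box f)_{nK}+E$, where $\Box=\dbar\dbar^*+\dbar^*\dbar$ and the error $E$ is a first-order expression in the coefficients of $f$ with coefficients built from at most third derivatives of the (here $C^4$) defining function. Since $E$ is first order, $\|E\|_{W^{-1,p}}\lesssim\|f\|_{L^p}$, so it remains to estimate $\|(\Box f)_{nK}\|_{W^{-1,p}}$.

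The heart of the argument is the following duality step, which is where the $L^p$ theory genuinely differs from the $L^2$ one. To compute the $W^{-1,p}$ norm I test against scalars $\phi\in W^{1,\widetilde{p}}_0(\D)$ supported in the collar and set $\psi=\chi\phi\,\bar\omega^{nK}$; then $\psi$ vanishes on $b\D$, so $\psi\in Dom(\dbar)\cap Dom(\dbar^*)$, and the pairing picks out the $nK$-component, giving $\langle(\Box f)_{nK},\phi\rangle=\langle\Box f,\psi\rangle$ up to a bounded smooth factor. Integrating by parts through the adjoints yields $\langle\Box f,\psi\rangle=\langle\dbar f,\dbar\psi\rangle+\langle\dbar^* f,\dbar^*\psi\rangle$, with no boundary contribution because $\psi\in Dom(\dbar^*)$. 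Now I apply Hölder with $f$ on the $L^p$ side and $\psi$ on the $L^{\widetilde{p}}$ side, obtaining $|\langle\Box f,\psi\rangle|\lesssim(\|\dbar f\|_{L^p}+\|\dbar^* f\|_{L^p})\|\psi\|_{W^{1,\widetilde{p}}}\lesssim(\|\dbar f\|_{L^p}+\|\dbar^* f\|_{L^p})\|\phi\|_{W^{1,\widetilde{p}}}$. \emph{The point is that the single derivative produced by $\Box$ is absorbed by the test form $\psi$, so one never needs to control any derivative of $f$ in $L^p$} --- only $\dbar f$ and $\dbar^* f$, which are given, together with the zeroth-order term $\|f\|_{L^p}$. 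Taking the supremum over $\phi$ gives $\|(\Box f)_{nK}\|_{W^{-1,p}}\lesssim\|\dbar f\|_{L^p}+\|\dbar^* f\|_{L^p}$, and chaining the estimates and summing over $K$ produces the claimed bound.

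The main obstacle I anticipate is bookkeeping rather than conceptual: verifying that in the curved special frame the relation $\Delta f_{nK}=-4(\Box f)_{nK}+E$ holds with $E$ genuinely first order (so that the frame and defining-function derivatives stay within the $C^4$ budget), and checking that the cutoff and localization do not spoil the Dirichlet condition $f_{nK}|_{b\D}=0$ needed to apply Lemma \ref{LemNegative}. Once the duality step is set up so that the regularity rides on $\psi$, the remaining estimates are routine.
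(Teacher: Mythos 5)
Your strategy is the one the paper intends: reduce to smooth forms via Lemma \ref{LemDensity}, use the $Dom(\dbar^*)$ boundary condition to see that the coefficients of $f_{norm}$ vanish on $b\D$, apply the Jerison--Kenig estimate of Lemma \ref{LemNegative}, and bound the Laplacian in $W^{-1,p}$ by integrating by parts so that the derivative lands on the test function, leaving only $\dbar f$, $\dbar^*f$ and $f$ measured in $L^p$. Your packaging via the special frame, the identity $\Delta f_{nK}=-4(\Box f)_{nK}+E$ with $E$ first order, and the quadratic-form identity $\langle \Box f,\psi\rangle=\langle\dbar f,\dbar\psi\rangle+\langle\dbar^*f,\dbar^*\psi\rangle$ for $\psi$ vanishing on $b\D$, is a legitimate reorganization of the same integration by parts that the proof of \cite[Lemma 2.12]{StraubeBook} carries out componentwise; your accounting of the $C^4$ regularity budget is also correct.

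There is, however, one genuine gap: the localization. Your cutoff $\chi$ equals $1$ only near $b\D$, so the argument as written bounds $\|\chi f_{nK}\|_{W^{1,p}}$; the piece $(1-\chi)f_{norm}$, supported compactly inside $\D$, is never estimated. Its $W^{1,p}$ norm involves full first derivatives of the coefficients of $f$, which are \emph{not} controlled by $\|f\|_{L^p}$ together with your commutator bookkeeping; you would additionally need interior $L^p$ elliptic estimates for the elliptic system $(\dbar,\vartheta)$, where $\vartheta$ is the formal adjoint of $\dbar$, i.e. $\|(1-\chi)f\|_{W^{1,p}}\lesssim \|\dbar f\|_{L^p}+\|\dbar^*f\|_{L^p}+\|f\|_{L^p}$ --- true and standard, but a missing ingredient in your write-up. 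The argument the paper follows avoids this issue entirely by never localizing: one works with the globally defined coefficients $S_K=\sum_j f_{jK}\,\partial\rho/\partial z_j$ of $f_{norm}$, where $\rho$ is a $C^4$ defining function. These vanish on $b\D$ (this is exactly the $Dom(\dbar^*)$ condition for smooth forms), hence lie in $W^{1,p}_0(\D)$ on all of $\D$, and the divergence-form rewriting of $\Delta S_K$ --- using that $\partial f_{jK}/\partial\zb_m-\partial f_{mK}/\partial\zb_j$ is a coefficient of $\dbar f$ and that $\sum_m\partial f_{mK}/\partial z_m$ is, up to sign, a coefficient of $\vartheta f$ --- is a pointwise identity valid throughout $\D$, so Lemma \ref{LemNegative} applies globally with no cutoff, no frame, and no interior term. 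Either patch (invoke interior ellipticity for the collar complement, or run the argument globally with the $S_K$) completes your proof.
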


We will use Lemma \ref{LemNegative} to also prove the following $L^p$ version of 
\cite[Proposition 5.1.1]{ChenShawBook}. 

\begin{proposition}\label{PropMult}
Let $\D$ be a $C^2$-smooth bounded domain in $\C^n, 1<p<\infty,1\leq q\leq n,$ 
and $\phi\in C^1(\Dc)$ such that $\phi=0$ on $b\D$. Then there exists $C>0$ such that 
\begin{align*}
\|\phi f\|_{W^{1,p}} \leq C(\|\dbar f\|_{L^p}+\|\dbar^* f\|_{L^p}+\|f\|_{L^p}) 
\end{align*}
for $f\in D^p_{(0,q)}(\D)$. 
\end{proposition}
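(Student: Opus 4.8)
The plan is to estimate the $W^{1,p}$-norm of $\phi f$ one coefficient at a time via the Jerison--Kenig inequality of Lemma \ref{LemNegative}, and to control the resulting negative-norm of the Laplacian by exploiting that, in the flat metric on $\C^n$, the scalar Laplacian of the coefficients of a $(0,q)$-form equals a fixed constant multiple of the complex Laplacian $\dbar\dbar^*+\dbar^*\dbar$ acting on the form. The gain is that this second-order operator factors as a composition of the two first-order operators $\dbar$ and $\dbar^*$, each of which maps $L^p$ into $W^{-1,p}$.

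First I would reduce to the smooth case. By Lemma \ref{LemDensity} applied with $k=1$ (which is exactly why $C^2$-smoothness is assumed), the space $C^1_{(0,q)}(\Dc)\cap Dom(\dbar^*)$ is dense in $D^p_{(0,q)}(\D)$ in the graph norm, so it suffices to prove the estimate, with a constant independent of $f$, for $f\in C^1_{(0,q)}(\Dc)\cap Dom(\dbar^*)$; the general statement then follows because the right-hand side is continuous in the graph norm and a uniform bound forces $\{\phi f_j\}$ to be Cauchy, hence convergent, in $W^{1,p}_0$ whenever $f_j\to f$ in the graph norm. For such smooth $f$, the form $\phi f$ is $C^1$ up to $b\D$ and vanishes on $b\D$ because $\phi=0$ there; consequently each coefficient of $\phi f$ lies in $W^{1,p}_0(\D)$, so Lemma \ref{LemNegative} applies componentwise and gives
\[\|\phi f\|_{W^{1,p}}\lesssim \|\Delta(\phi f)\|_{W^{-1,p}},\]
where $\Delta$ acts on each coefficient.

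Next I would rewrite the Laplacian in terms of $\dbar$ and $\dbar^*$. Using $\Delta(\phi f)=c\,(\dbar\dbar^*+\dbar^*\dbar)(\phi f)$ and the fact that each of $\dbar$ and $\dbar^*$, being a first-order operator with smooth bounded coefficients, maps $L^p$ boundedly into $W^{-1,p}$ (by duality, $\partial_\alpha\colon L^p\to W^{-1,p}$ is bounded), I would factor $(\dbar\dbar^*+\dbar^*\dbar)(\phi f)=\dbar\bigl(\dbar^*(\phi f)\bigr)+\dbar^*\bigl(\dbar(\phi f)\bigr)$ to obtain
\[\|\Delta(\phi f)\|_{W^{-1,p}}\lesssim \|\dbar(\phi f)\|_{L^p}+\|\dbar^*(\phi f)\|_{L^p}.\]
The Leibniz rule then yields $\dbar(\phi f)=\phi\,\dbar f+\dbar\phi\wedge f$ and $\dbar^*(\phi f)=\phi\,\dbar^* f$ plus a zeroth-order term in $f$ whose coefficients come from $\dbar\phi$; since $\phi$ and $\nabla\phi$ are bounded on $\Dc$, both $L^p$-norms are bounded by $\|\dbar f\|_{L^p}+\|\dbar^* f\|_{L^p}+\|f\|_{L^p}$. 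Chaining the three displayed estimates proves the proposition.

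The step I expect to be the main obstacle is the passage to the negative Sobolev norm: one must verify that factoring the second-order complex Laplacian into two first-order pieces loses exactly one derivative in a manner matched to the $W^{-1,p}$ hypothesis of Jerison--Kenig, and that no uncontrolled boundary term survives in $\dbar^*(\phi f)$. The latter relies decisively on $\phi|_{b\D}=0$, which is also what places every coefficient of $\phi f$ in $W^{1,p}_0$; care is needed so that the density argument supplies approximants that are simultaneously smooth up to $b\D$ and vanish there.
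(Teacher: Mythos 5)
Your proposal is correct and follows essentially the same route as the paper's proof: apply the Jerison--Kenig inequality (Lemma \ref{LemNegative}) to $\phi f$, whose coefficients lie in $W^{1,p}_0(\D)$ because $\phi$ vanishes on $b\D$, bound $\|\Delta(\phi f)\|_{W^{-1,p}}$ by $\|\dbar (\phi f)\|_{L^p}+\|\dbar^*(\phi f)\|_{L^p}$, then use the Leibniz rule and conclude by the density Lemma \ref{LemDensity}. The only difference is organizational (you do the density reduction first, and you spell out the factorization of the complex Laplacian and the duality argument for $\partial_\alpha\colon L^p\to W^{-1,p}$, which the paper leaves implicit), so there is nothing to correct.
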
 

\begin{proof} 
First we assume that $g\in  D^p_{(0,q)}(\D)$ with coefficient functions in  $W^{1,p}_0(\D)$. 
Then we have 
\begin{align*}
\|g\|_{W^{1,p}} \lesssim \|\Delta g\|_{W^{-1,p}}\lesssim \|\dbar g\|_{L^p}+\|\dbar^*g\|_{L^p}.
\end{align*}
Then we substitute $g=\phi f$ for $f\in C^1_{(0,q)}(\Dc)\cap Dom(\dbar^*)$ in the 
inequality above
\begin{align*}
\|\phi f\|_{W^{1,p}} \lesssim \|\dbar (\phi f)\|_{L^p}+\|\dbar^* (\phi f)\|_{L^p}
\lesssim \|\dbar f\|_{L^p}+\|\dbar^* f\|_{L^p}+\|f\|_{L^p}.
\end{align*}
Then we use Lemma \ref{LemDensity} to conclude the proof. 
\end{proof}

The interpolation inequality for Sobolev spaces together with 
Proposition \ref{PropMult} imply the following corollary.

\begin{corollary}\label{CorCompMult}
Let $\D$ be a $C^2$-smooth bounded domain in $\C^n, 1<p<\infty, 1\leq q\leq n,$ 
and $\phi\in C(\Dc)$ such that $\phi=0$ on $b\D$. Then the multiplication operator 
$M_{\phi}:D^p_{(0,q)}(\D)\to L^p_{(0,q)}(\D)$ is compact. 
\end{corollary}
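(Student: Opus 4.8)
The plan is to prove compactness of $M_\phi$ first for $C^1$ multipliers, where Proposition \ref{PropMult} is directly applicable, and then to recover the general continuous case by a uniform-approximation argument. Throughout I equip $D^p_{(0,q)}(\D)$ with the full graph norm $f\mapsto\|f\|_{L^p}+\|\dbar f\|_{L^p}+\|\dbar^* f\|_{L^p}$ appearing in Lemma \ref{LemDensity}; keeping the $\|f\|_{L^p}$ summand is essential here, since $L^p$-regularity is not yet available. Its payoff is that for every $\psi\in C(\Dc)$ one has $\|M_\psi f\|_{L^p}\leq (\sup_{\Dc}|\psi|)\,\|f\|_{L^p}\leq (\sup_{\Dc}|\psi|)(\|f\|_{L^p}+\|\dbar f\|_{L^p}+\|\dbar^* f\|_{L^p})$, so that $M_\psi\colon D^p_{(0,q)}(\D)\to L^p_{(0,q)}(\D)$ is bounded with operator norm at most $\sup_{\Dc}|\psi|$; I will use this bound repeatedly in the approximation step.

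First I would fix $\phi\in C^1(\Dc)$ with $\phi=0$ on $b\D$. By Proposition \ref{PropMult}, $M_\phi$ carries the unit ball of $D^p_{(0,q)}(\D)$ into a bounded subset of $W^{1,p}_{(0,q)}(\D)$, and since $\phi$ vanishes on $b\D$ these images in fact lie in $W^{1,p}_{0,(0,q)}(\D)$. Invoking the compactness of the Sobolev embedding $W^{1,p}_0(\D)\hookrightarrow L^p(\D)$ on the bounded domain $\D$ — the Rellich--Kondrachov theorem, whose analytic input is the Sobolev interpolation inequality — this bounded subset of $W^{1,p}_0$ is precompact in $L^p_{(0,q)}(\D)$. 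Hence $M_\phi\colon D^p_{(0,q)}(\D)\to L^p_{(0,q)}(\D)$ is compact for every $C^1$ multiplier vanishing on $b\D$.

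To handle a general $\phi\in C(\Dc)$ with $\phi=0$ on $b\D$, I would approximate $\phi$ uniformly on $\Dc$ by functions $\phi_j\in C^1(\Dc)$ that also vanish on $b\D$. Granting such $\phi_j$, each $M_{\phi_j}$ is compact by the previous paragraph, while the operator-norm bound recorded above gives $\|M_\phi-M_{\phi_j}\|=\|M_{\phi-\phi_j}\|\leq\sup_{\Dc}|\phi-\phi_j|\to 0$. Since the compact operators form a closed subspace of the bounded operators in the operator norm, $M_\phi$ is compact, which completes the proof.

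The step I expect to be the main obstacle is producing the approximants $\phi_j$: they must converge uniformly to $\phi$ and yet vanish \emph{exactly} on $b\D$, since otherwise Proposition \ref{PropMult} does not apply. The cleanest route is to fix $\ep>0$, use $\phi|_{b\D}=0$ together with uniform continuity to select a collar $U_\ep$ of $b\D$ on which $|\phi|<\ep$, multiply $\phi$ by a smooth cutoff that equals $1$ off $U_\ep$ and $0$ on a smaller collar, and finally mollify; this produces $\phi_\ep\in C^1(\Dc)$ vanishing near $b\D$ with $\sup_{\Dc}|\phi-\phi_\ep|\lesssim\ep$. Once this construction is in place, the remainder of the argument is the routine fact that an operator-norm limit of compact operators is compact.
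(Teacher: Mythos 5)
Your proof is correct and follows essentially the paper's own route: Proposition \ref{PropMult} supplies the $W^{1,p}$ bound for $C^1$ multipliers vanishing on $b\D$, and the compactness of the Sobolev embedding into $L^p$ (what the paper compresses into ``the interpolation inequality for Sobolev spaces'') converts that bound into compactness of $M_\phi$. Your remaining steps --- working with the full graph norm so that $\|M_\psi\|\leq \sup_{\Dc}|\psi|$, approximating a continuous $\phi$ vanishing on $b\D$ by $C^1$ functions vanishing on $b\D$ via cutoff and mollification, and using that compact operators are closed under operator-norm limits --- are exactly the details the paper's one-sentence proof leaves implicit, and they are handled soundly.
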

In other words, in the terminology of \cite{CelikStraube09}, continuous 
functions on $\Dc$ that vanish on the boundary are compactness multipliers. 

We note that even though \cite[Lemma 4.3]{StraubeBook}
is stated for Hilbert spaces the proof works for Banach spaces 
as well. In the proof of i) of Lemma \ref{LemCompEst} below 
one uses the facts that on reflective Banach spaces bounded 
sequences have weakly convergent subsequences 
(see \cite[Theorem 1 on pg 126]{YosidaBook}) as well as 
compact operators map weakly convergent sequences to convergent 
sequences. Therefore, proof of \cite[Lemma 4.3]{StraubeBook} 
(see also exercise 6.13 in \cite{BrezisBook}) implies the 
following lemma.
\begin{lemma}\label{LemCompEst}
Let $T:X\to Y$ be a bounded linear map where $X$ is a normed linear space 
and $Y$ is a Banach space. 
\begin{itemize}
\item[i.] 
Assume that for all $\ep>0$ there exist a Banach space 
$Z_{\ep}$ and a compact linear map $K_{\ep}:X\to Z_{\ep}$ such that 
\[\|Tx\|_Y\leq \ep\|x\|_X+\|K_{\ep} x\|_{Z_{\ep}}\]
for all $x\in X$. Then $T$ is compact.
\item [ii.] Assume that $X$ is reflexive Banach space, $T$ is compact and  
$K:X\to Z$ is an injective bounded linear map. Then for any $\ep>0$ there 
exists $C_{\ep}>0$ such that 
\[\|Tx\|_Y\leq \ep\|x\|_X+C_{\ep}\|K x\|_Z\]
for all $x\in X$.
\end{itemize}
\end{lemma}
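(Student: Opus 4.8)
The plan is to prove the final statement — Lemma~\ref{LemCompEst}, the abstract compactness criterion — by invoking the argument in \cite[Lemma 4.3]{StraubeBook} and checking that each Hilbert-space step survives in the reflexive Banach setting. I would state at the outset that both parts rest on two facts about reflexive Banach spaces: bounded sequences admit weakly convergent subsequences (\cite[Theorem 1 on pg 126]{YosidaBook}), and compact operators send weakly convergent sequences to norm-convergent sequences.

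For part (i), I would argue by contradiction with the sequential characterization of compactness. Suppose $T$ is not compact; then there is a bounded sequence $(x_n)$ in $X$ with $(Tx_n)$ having no convergent subsequence. Since $X$ may only be normed and not reflexive, the care here is that we cannot directly extract a weakly convergent subsequence of $(x_n)$; instead I would follow the $\ep$-net / diagonal scheme of the cited proof: for each fixed $\ep$ the hypothesis bounds $\|Tx\|_Y$ by $\ep\|x\|_X$ plus a term controlled by the compact map $K_\ep$. Passing to $\ep = 1/k$ and diagonalizing, one extracts a subsequence along which each $K_{1/k}x_n$ converges; the inequality then shows $(Tx_n)$ is Cauchy in the complete space $Y$, contradicting the assumption. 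The only structural requirement here is completeness of $Y$, which is assumed, so part (i) goes through verbatim.

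For part (ii), here $X$ is reflexive, so I would argue again by contradiction: negating the conclusion produces, for some fixed $\ep>0$, a sequence $(x_n)$ with $\|x_n\|_X=1$, $\|Tx_n\|_Y > \ep + n\|Kx_n\|_Z$. The bound forces $\|Kx_n\|_Z \to 0$. By reflexivity extract a weakly convergent subsequence $x_n \rightharpoonup x$; since $T$ is compact, $Tx_n \to Tx$ in norm, and since $K$ is bounded, $Kx_n \rightharpoonup Kx$, whence $\|Kx\|_Z=0$ by weak lower semicontinuity of the norm, so injectivity of $K$ gives $x=0$. Then $Tx_n \to 0$, contradicting $\|Tx_n\|_Y > \ep$. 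This is where reflexivity is genuinely used and is the step I expect to be the main (though mild) obstacle, since it is the only place the Hilbert-space inner-product argument must be replaced.

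I would close by noting that the two replacements — weak sequential compactness via reflexivity and weak lower semicontinuity of the norm in place of orthogonal-projection estimates — are exactly the ingredients that make the Hilbert-space proof of \cite[Lemma 4.3]{StraubeBook} transfer, so no new computation is required beyond flagging these substitutions.
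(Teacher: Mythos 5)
Your proof is correct and is essentially the paper's own argument: the paper gives no detailed proof, but simply cites \cite[Lemma 4.3]{StraubeBook} (and exercise 6.13 in \cite{BrezisBook}) together with the two facts you open with --- weak sequential compactness of bounded sequences in reflexive spaces and the fact that compact operators take weakly convergent sequences to norm-convergent ones --- and your write-up is precisely the standard proof those citations refer to (diagonal extraction and Cauchy-ness in the complete space $Y$ for part (i), the contradiction/normalization argument for part (ii)). A minor point in your favor: you correctly locate the use of reflexivity in part (ii), whereas the paper's prefatory remark attributes those facts to part (i), which in fact needs only completeness of $Y$, exactly as you observe.
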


\begin{proof}[Proof of Theorem \ref{Thm1}]
We define $K:D^p_{(0,1)}(\D)\to L^p_{(0,1)}(\D)$ as $K f=\rho f$ where 
$\rho(z)=dist(z,b\D)$ is the distance of $z$ to the boundary of $\D$. Then 
Corollary \ref{CorCompMult} implies that $K$ is compact for all $1<p<\infty$. 
We note that $K$ is an injection as well. 

Since $\D$ is a bounded pseudoconvex domain, $D^2_{(0,1)}(\D)$ is a Hilbert space. 
Then we use ii. in Lemma \ref{LemCompEst} to get the following estimates: 
for any $\ep>0$ there exists $C_{\ep}>0$ such that 
\begin{align*} 
\|f\|_{L^2}\leq \ep(\|\dbar f\|_{L^2}+\|\dbar^* f\|_{L^2})+C_{\ep}\|Kf\|_{L^2}
\end{align*}
for all $f\in D^2_{(0,1)}(\D)$.

First we show how to get $L^4$-regularity. Let 
$F= f_1\wb_1+f_2\wb_2$ be in $D^4_{(0,1)}(\D)\subset L^4_{(0,1)}(\D)$  
such that $f_2$ is the normal component. 
Because of Lemma \ref{LemDensity}, without loss of generality, 
we may assume that $f_1$ and $f_2$ are $C^3$-smooth on $\Dc$.
We denote $F_2=f_1^2\wb_1+f_2^2\wb_2$. Since $f_2$ vanishes 
on the boundary we have $F_2\in D^2_{(0,1)}(\D)$. Then 
$\|F\|_{L^4}^4\approx \|F_2\|_{L^2}^2<\infty$ and 
\begin{align*}
\|F_2\|_{L^2}\leq & \ep (\|\dbar F_2\|_{L^2}+ \|\dbar^*F_2\|_{L^2})+C_{\ep}\|KF_2\|_{L^2} \\
\lesssim &\ep\left(\|f_1\Lb_2f_1-f_2\Lb_1f_2\|_{L^2}
+\|f_1L_1f_1+f_2L_2f_2\|_{L^2}+\|F_2\|_{L^2}\right)\\
&+C_{\ep}\|KF_2\|_{L^2}\\
\lesssim &\ep\left(\|f_1\Lb_2f_1-f_1\Lb_1f_2\|_{L^2}
+\|f_1\Lb_1f_2-f_2\Lb_1f_2\|_{L^2}\right) \\
& +\ep \left(\|f_1L_1f_1+f_1L_2f_2\|_{L^2} 
+\|f_1L_2f_2-f_2L_2f_2\|_{L^2}+\|F_2\|_{L^2}\right)\\
&+C_{\ep}\|KF_2\|_{L^2}
\end{align*}
By absorbing the terms that are multiple of $\|F_2\|_{L^2}$ 
into the left hand side we get
\begin{align}\label{Eqn3}
\|F_2\|_{L^2}\lesssim& \ep\left(\|f_1\dbar F\|_{L^2} +\|(f_1-f_2)\Lb_1f_2\|_{L^2}
+ \|f_1\dbar^* F\|_{L^2} +\|(f_1-f_2)L_2f_2\|_{L^2}\right)\\
\nonumber &+C_{\ep}\|KF_2\|_{L^2}.
\end{align}
Using the facts that $\|F_2\|_{L^2}\approx \|F\|^2_{L^4}<\infty$, $KF_2=\rho F_2$,  
and the Cauchy-Schwarz inequality we get 
\begin{align*}
\|F\|^2_{L^4}\lesssim &
\ep\|F\|_{L^4}\left(\|\dbar F\|_{L^4}+\|\dbar^* F\|_{L^4}+\|f_2\|_{W^{1,4}}\right) 
+C_{\ep}\|\rho^{1/2}F\|^2_{L^4}\\
\lesssim & \ep\|F\|_{L^4}\left(\|\dbar F\|_{L^4} +\|\dbar^* F\|_{L^4}+\|f_2\|_{W^{1,4}}\right)
+C_{\ep}\|F\|_{L^4}\|\rho F\|_{L^4}. 
\end{align*} 
Using the inequality $2|xy|\leq |x|^2+|y|^2$ on right hand 
side we can absorb $\|F\|_{L^4}$ into the left hand side and get 
($C_{\ep}$ below is different from its previous values, but it still depends on $\ep$ only)
\begin{align}\label{Eqn1}
\|F\|^2_{L^4}\lesssim 
\ep\left(\|\dbar F\|^2_{L^4} +\|\dbar^* F\|^2_{L^4}+\|f_2\|^2_{W^{1,4}}\right) 
+C_{\ep}\|\rho F\|^2_{L^4}. 
\end{align}

Now we will concentrate on $\|f_2\|_{W^{1,4}}$. Using Lemma \ref{LemNegative} 
and Lemma \ref{LemNormal} we get 
\begin{align*}
\|f_2\|^2_{W^{1,4}} \lesssim \|\dbar F\|^2_{L^4}+\|\dbar^* F\|^2_{L^4} +\|F\|^2_{L^4}.
\end{align*}
Then the inequality \eqref{Eqn1} turns into
\begin{align*} 
\|F\|^2_{L^4}\lesssim 
\ep\left(\|\dbar F\|^2_{L^4}+\|\dbar^* F\|^2_{L^4}\right) +C_{\ep}\|\rho F\|^2_{L^4}. 
\end{align*} 
That is, we showed that for $\ep>0$ given there exists $C_{\ep}>0$ such that 
\begin{align}\label{Eqn2}
\|\J F\|_{L^4} \leq \ep\left(\|\dbar F\|_{L^4}+\|\dbar^* F\|_{L^4}\right) +C_{\ep}\|KF\|_{L^4} 
\end{align} 
for $F\in D^4_{(0,1)}(\D)$. Therefore, $\J:D^4_{(0,1)}(\D)\to L^4_{(0,1)}(\D)$ 
is a compact operator. Furthermore, since $\J$ is $L^4$-regular, 
one can show that  $D^4_{(0,1)}(\D)$ is a Banach space.

In a similar fashion, we use estimates \eqref{Eqn2} to show that for 
every $\ep>0$ there exists $C_{\ep}>0$ such that  
\begin{align*}
\|\J F\|_{L^8} \leq  
\ep\left(\|\dbar F\|_{L^8}+\|\dbar^* F\|_{L^8}\right) +C_{\ep}\|KF\|_{L^8}
\end{align*} 
for $F\in D^8_{(0,1)}(\D)$. That is,  $\J:D^8_{(0,1)}(\D)\to L^8_{(0,1)}(\D)$ 
is a compact linear map (by Lemma \ref{LemCompEst}) and $D^8_{(0,1)}(\D)$ 
is a Banach space. Inductively, we show that  
$\J:D^{2^p}_{(0,1)}(\D)\to L^{2^p}_{(0,1)}(\D)$ is a compact 
linear map and $D^{2^p}_{(0,1)}(\D)$ is a Banach space for $p\in\mathbb{Z}^+$. 

Note that for any  $p\in\mathbb{Z}^+$, we have 
$D^{2^p}_{(0,1)}(\D)\cap D^{2^{p+1}}_{(0,1)}(\D)=D^{2^{p+1}}_{(0,1)}(\D)$ 
and $D^{2^p}_{(0,1)}(\D)+ D^{2^{p+1}}_{(0,1)}(\D)\subset D^{2^{p}}_{(0,1)}(\D)$. 
In other words, for $2^p<q<2^{p+1}$ we get 
\[D^{2^p}_{(0,1)}(\D)\subset D^{q}_{(0,1)}(\D)\subset D^{2^{p+1}}_{(0,1)}(\D)\]
and since the graph norm is the sum of $L^p$ norms we conclude that 
$D^{q}_{(0,1)}(\D)$  is an intermediate space (\cite[Definition 2.4.1]{Bergh}) 
for twoBanach spaces $D^{2^p}_{(0,1)}(\D)$ and $D^{2^{p+1}}_{(0,1)}(\D)$. 
Now, by the complex interpolation theorem (\cite[Chapter 4]{Bergh}) 
we conclude that $\J:D^p_{(0,1)}(\D)\to L^p_{(0,1)}(\D)$ is 
$L^p$-regular and $D^p_{(0,1)}(\D)$ is a Banach space for all $2\leq p<\infty$. 
\end{proof}

\begin{remark}
The proof of Theorem \ref{Thm1} shows that we have the same result 
for $(p,n-1)$-forms on $C^4$-smooth bounded pseudoconvex domains in $\C^n$. 
\end{remark}

We note that $Ker(\dbar)$ and $ A^2(\D)^{\perp}$ denote the set of 
$\dbar$-closed forms and the orthogonal complement of the 
Bergman space $A^2(\D)\subset L^2(\D)$, respectively.

\begin{proposition}\label{Prop1}
Let $\D$ be a  bounded pseudoconvex domain in $\C^n,n\geq 2,$ and $1<p\leq 2$. 
Assume that $\J$ is $L^p$-regular on $D^p_{(0,1)}(\D)$. Then the following operators 
are bounded 
\begin{itemize}
\item [i.] $\dbar^*N_2:L^p_{(0,2)}(\D)\cap L^2_{(0,2)}(\D)\cap Ker(\dbar) \to L^p_{(0,1)}(\D)$, 
\item[ii.] $\dbar N_0:L^p(\D)\cap L^2(\D)\cap  A^2(\D)^{\perp} \to L^p_{(0,1)}(\D)$.
\end{itemize}
\end{proposition}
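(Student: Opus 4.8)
The plan is to exploit the $L^2$-theory (where the $\dbar$-Neumann operator and the canonical solution operators are well understood) together with the assumed $L^p$-regularity of $\J$, and to interpret the stated operators as $\J^{-1}$ followed by $\J$ in a suitable sense. For part (i), let $u\in L^p_{(0,2)}(\D)\cap L^2_{(0,2)}(\D)\cap Ker(\dbar)$ and set $v=\dbar^*N_2 u\in L^2_{(0,1)}(\D)$. The first task is to identify $v$ as an element of $D^p_{(0,1)}(\D)$ whose graph norm is controlled by $\|u\|_{L^p}$; once this is done, $L^p$-regularity of $\J$ gives $\|v\|_{L^p}=\|\J v\|_{L^p}\lesssim \|v\|_{G^p}\lesssim \|u\|_{L^p}$, which is exactly the desired bound. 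Concretely, I would compute $\dbar v$ and $\dbar^* v$ for the canonical solution $v$ of $\dbar v=u$ on $Ker(\dbar)$: since $v$ is the canonical solution we have $\dbar^* v=0$, and since $u$ is $\dbar$-closed, $\dbar v=u$, so the graph norm of $v$ is literally $\|\dbar v\|_{L^p}+\|\dbar^* v\|_{L^p}=\|u\|_{L^p}$. The substance of the argument is therefore to verify that the $L^2$-canonical solution actually lies in $L^p$ with $\dbar v=u$ in the $L^p$ sense and $v\in Dom^p(\dbar^*)$, so that it is a legitimate element of $D^p_{(0,1)}(\D)$.

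For part (ii) I would argue in parallel, taking $g\in L^p(\D)\cap L^2(\D)\cap A^2(\D)^{\perp}$ and setting $w=\dbar N_0 g$. Here the relevant fact is that $\dbar N_0=\dbar(\dbar^*\dbar)^{-1}$ on the $(0,0)$-level gives the canonical solution of $\dbar^* w = $ (something controlled by $g$) while $\dbar w=0$; more precisely $w=\dbar N_0 g$ satisfies $\dbar w=0$ and $\dbar^* w=\dbar^*\dbar N_0 g$, which equals $g$ up to the Bergman projection, and on $A^2(\D)^{\perp}$ one has $\dbar^* w=g$. Thus again $w\in D^p_{(0,1)}(\D)$ with $\|\dbar w\|_{L^p}+\|\dbar^* w\|_{L^p}\lesssim \|g\|_{L^p}$, and $L^p$-regularity of $\J$ closes the estimate. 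The two parts are genuinely dual to each other: (i) is the canonical solution operator for $\dbar$ acting on $\dbar$-closed $(0,2)$-forms, while (ii) is the analogous canonical operator producing $(0,1)$-forms orthogonal to the Bergman space, so a single lemma identifying $\J^{-1}$ with the pair $(\dbar,\dbar^*)$ on the canonical solution should handle both simultaneously.

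The main obstacle I anticipate is the passage between the $L^2$ and $L^p$ worlds, i.e.\ showing that the $L^2$-canonical solution $v$ (resp.\ $w$) genuinely belongs to $Dom^p(\dbar)\cap Dom^p(\dbar^*)$ and not merely to its $L^2$ counterpart, and that the weak identities $\dbar v=u$, $\dbar^* v=0$ persist at the $L^p$ level. The restriction to $1<p\leq 2$ in the hypothesis is precisely what makes this feasible by density: since $p\leq 2$ the $L^2$-object is already in $L^p$ when it is in $L^p$ at all, and the dual exponent $\widetilde p\geq 2$ lets one test against $L^{\widetilde p}$-forms using the $L^2$-pairing. I would therefore first establish the identity on a dense class of smooth forms (using Lemma \ref{LemDensity} on the intersection $L^p\cap L^2$), verifying the defining inequality for $Dom^p(\dbar^*)$ by hand, and then pass to the limit.

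The step I expect to require the most care is the verification of the $Dom^p(\dbar^*)$ membership in the definition from the introduction, namely producing the constant $C$ in $|\langle v,\dbar g\rangle|\leq C\|g\|_{L^{\widetilde p}}$ for all $g\in L^{\widetilde p}_{(0,q-1)}(\D)$ with $\dbar g\in L^{\widetilde p}$. For the canonical solution this should reduce to the $L^2$-orthogonality $v\perp Ker(\dbar)$, but one must check that this orthogonality, combined with the $L^p$-membership of $v$ and the $L^{\widetilde p}$-test functions, yields the required estimate without invoking $L^p$-boundedness of the Bergman projection (which the introduction explicitly flags as open). I expect the duality between $p$ and $\widetilde p$ together with the hypothesis $\J$ being $L^p$-regular to be exactly sufficient to push this through, and I would isolate this verification as the technical heart of the proof.
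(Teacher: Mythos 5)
Your proposal is correct and takes essentially the same route as the paper: apply the $L^p$-regularity estimate of $\J$ directly to $\dbar^*N_2u$ (resp.\ $\dbar N_0g$), using that $\dbar(\dbar^*N_2u)=u$ and $\dbar^*(\dbar^*N_2u)=0$ on $Ker(\dbar)$ (resp.\ $\dbar(\dbar N_0g)=0$ and $\dbar^*(\dbar N_0g)=g$ on $A^2(\D)^{\perp}$). The membership question you isolate as the technical heart is settled in the paper by the one-line observation that $Dom^2(\dbar^*)\subset Dom^p(\dbar^*)$ for $p\leq 2$ on a bounded domain, which is precisely the $L^2\subset L^p$ and $L^{\widetilde{p}}\subset L^2$ duality mechanism you describe.
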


\begin{proof}
Since $\J$ is $L^p$-regular and there exists $C>0$ such that
\begin{align}\label{EqnBasicEst}
	\|f\|_{L^p} \leq C\left(\|\dbar f\|_{L^p}+\|\dbar^* f\|_{L^p}\right)
\end{align} 
for $f\in D^p_{(0,1)}(\D)$. Note that $\dbar N_0g\in Dom^2(\dbar^*)\subset Dom^p(\dbar^*)$ 
for $g\in  L^p(\D)\cap L^2(\D)\cap A^2(\D)^{\perp}$ and $p\leq 2$. Then applying  
the estimate \eqref{EqnBasicEst} to $\dbar N_0g$ we get 
\[\|\dbar N_0g\|_{L^p} \leq C\|\dbar^*\dbar N_0 g\|_{L^p}=C\|g\|_{L^p}\]
for $g\in  L^2(\D)\cap A^2(\D)^{\perp}$. 

Similarly, if we apply \eqref{EqnBasicEst} to $\dbar^*N_2h$ with 
$h\in L^p_{(0,2)}(\D)\cap L^2_{(0,2)}(\D)\cap  Ker(\dbar)$ we get 
\[\|\dbar^*N_2h\|_{L^p} \leq C\|\dbar\dbar^* N_2 h\|_{L^p}=C\|h\|_{L^p}.\] 
Hence the proof of the proposition is complete. 
\end{proof} 

The following example shows that the $L^p$ boundedness of an operator $T$ 
is not an automatic  consequence of the compactness of $T$ on $L^2$.  
\begin{example} \label{Example2}
Set 
\[\phi(z)=\exp\left(\frac{-1}{1-|z|}\right)\] 
and consider the weighted Bergman space $A^2(\disk, \phi)$ on the unit disc. 
The weighted Bergman projection $\mathbf{B}_{\phi}$ is studied in 
\cite{Dostanic,DostanicRev,ZeyTran}, and it was noted that $\mathbf{B}_{\phi}$ 
is unbounded on $L^p(\mathbb{D},\phi)$ for any $p\not=2$. 

We define an operator $T$ on $L^2(\mathbb{D},\phi)$ by
\begin{align*}
T&:L^2(\mathbb{D},\phi)\to L^2(\mathbb{D},\phi)\\
Tf(z)&=\mathbf{B}_{\phi}(f)(z)(1-|z|^2)^{2}.
\end{align*}
The operator $T$ is bounded, linear and self-adjoint. Furthermore, 
we show  that $T$ is compact on $L^2(\mathbb{D},\phi)$ yet it is 
unbounded on $L^p(\mathbb{D},\phi)$  for any $p\not=2$.

First we show that $T$ is compact. For $\ep>0$ there exists a compact set 
$K_{\ep}\subset \mathbb{D}$ such that $(1-|z|^2)^2<\ep$ on $\mathbb{D}\setminus K$. 
We define $S_{\ep}f=\chi_{K_{\ep}}Tf$ where $\chi_{K_{\ep}}$ is the characteristic 
function of $K_{\ep}$. Montel's theorem implies that $S_{\ep}$ is  compact. 
\[\|Tf\|^2=\|Tf\|^2_{L^2(\mathbb{D}\setminus K,\phi)}+  \|Tf\|^2_{L^2(K,\phi)} 
\leq \ep\|\mathbf{B}_{\phi}f\|^2+\|S_{\ep}f\|^2\leq\ep\|f\|^2+\|S_{\ep}f\|^2.\] 
That is, $T$ satisfies compactness estimates and by Lemma \ref{LemCompEst}  
it is a compact operator on $L^2(\mathbb{D},\phi)$ (see also 
 \cite[Proposition V.2.3]{D`AngeloIneqBook} or \cite[Lemma 4.3]{StraubeBook}).

Next we show that $T$ is unbounded on $L^p(\mathbb{D},\phi)$ for any 
$p\neq 2$. Let $0< p<2$ and 
\[f_n(z)=z^{kn}\overline{z}^n\] 
where $k$ is a positive integer to be determined later. Then one can compute that 
\[Tf_n(z)=a_nz^{kn-n}(1-|z|^2)^2\] 
where 
\[a_n=\frac{\int_{\mathbb{D}}|z|^{2kn}\phi(z)dA(z)}{\int_{\mathbb{D}}|z|^{2(k-1)n} 
\phi(z)dA(z)}.\]
Furthermore, 
\begin{align*}
\frac{\|Tf_n\|_p^p}{\|f_n\|_p^p} 
=\left(\frac{\int_{\mathbb{D}}|z|^{2kn}\phi(z)dA(z)}{\int_{\mathbb{D}}|z|^{2(k-1)n}\phi(z)dA(z)}\right)^p 
\frac{\int_{\mathbb{D}}|z|^{pkn-pn}(1-|z|^2)^{2p}\phi(z)dA(z)}{\int_{\mathbb{D}}|z|^{pkn+pn}\phi(z)dA(z)}.
\end{align*}
We need the following asymptotic \cite[Lemma 1]{DostanicRev}
\[\int_{\mathbb{D}}|z|^{t}(1-|z|^2)^{2s}\phi(z)dA(z) \sim (t+1)^{\frac{-4s-3}{4}}\exp(-2\sqrt{t+1}) \]
as $t\to \infty$. 

We have the following asymptotic computations 
\begin{align*}
\frac{\|Tf_n\|_p^p}{\|f_n\|_p^p}
&=\left(\frac{\int_{\mathbb{D}}|z|^{2kn}\phi(z)dA(z)}{\int_{\mathbb{D}}|z|^{2(k-1)n} \phi(z)dA(z)}\right)^p 
\frac{\int_{\mathbb{D}}|z|^{pkn-pn}(1-|z|^2)^{2p}\phi(z)dA(z)}{\int_{\mathbb{D}}|z|^{pkn+pn}\phi(z)dA(z)}\\
&\sim \frac{(2kn+1)^{-3p/4}\exp(-2p\sqrt{2kn+1})}{(2kn-2n+1)^{-3p/4}\exp(-2p\sqrt{2kn-2n+1})}\\
&\times 
\frac{(pkn-pn+1)^{(-4p-3)/4}\exp(-2\sqrt{pkn-pn+1})}{(pkn+pn+1)^{-3/4}
\exp(-2\sqrt{pkn+pn+1})}\\
&\sim C_{k,p}n^{-p}\exp(2D_{k,p,n})
\end{align*}
as $n\to\infty$ where 
\[C_{k,p}=\frac{(k+1)^{3/4}}{p^pk^{3p/4}(k-1)^{(3+p)/4}}\] 
and 
\begin{align*}
D_{k,p,n}=&-p\sqrt{2kn+1}+p\sqrt{2kn-2n+1}-\sqrt{pkn-pn+1}+\sqrt{pkn+pn+1}\\
=&\frac{-2pn}{\sqrt{2kn+1}+\sqrt{2kn-2n+1}}+\frac{2pn}{\sqrt{pkn-pn+1}+\sqrt{pkn+pn+1}}\\
\geq &\frac{pn}{\sqrt{pkn+pn+1}}-\frac{pn}{\sqrt{2kn-2n+1}}\\
\geq &p\sqrt{n}\left( \frac{1}{\sqrt{pk+p+1}}- \frac{1}{\sqrt{2k-2}} \right). 
\end{align*}
Then one can show that for large $k$ we have 
\[\frac{1}{\sqrt{pk+p+1}}- \frac{1}{\sqrt{2k-2}} 
\geq \frac{1}{2}\left(\frac{1}{\sqrt{p}}-\frac{1}{\sqrt{2}}\right)>0.\]
Therefore, for large $k$ we have 
\[D_{k,p,n}\geq \frac{\sqrt{n}p}{2}\left(\frac{1}{\sqrt{p}}-\frac{1}{\sqrt{2}}\right).\]
Therefore, for $k$ large enough we have $C_{k,p}n^{-p}\exp(2D_{k,p,n})\to \infty$ as $n\to\infty$. 
Then we conclude that $\frac{\|Tf_n\|_p}{\|f_n\|_p}\to \infty$ as $n\to\infty$. 
Hence $T$ is  not bounded on $L^p(\disk,\phi)$ for any $p<2$.
Furthermore, the fact that $T$ is self-adjoint implies that $T$ is  unbounded 
on $L^p(\disk,\phi)$ for any $p\neq 2$.
\end{example} 

\section{Acknowledgment}
We would like to thank Emil Straube for reading a previous version of the 
manuscript and providing valuable feedback. We would like to thank the anonymous referees for constructive remarks in an earlier version of this paper.


\end{document}